\providecommand{\norm}[1]{\lVert#1\rVert}
\providecommand{\abs}[1]{\lvert#1\rvert}
\providecommand{\N}{\mathbb{N}}
\providecommand{\R}{\mathbb{R}}
\newcommand{\p}[1]{\langle #1 \rangle}
\DeclareMathOperator{\co}{conv}
\DeclareMathOperator{\inte}{int}
\newcommand{\g}[1]{\ldbrack #1 \rdbrack}
\theoremstyle{theorem}
\newtheorem{theorem}{Theorem}[section]
\newtheorem{corollary}{Corollary}[section]
\theoremstyle{definition}
\newtheorem{definition}{Definition}[section]
\newtheorem{example}{Example}[section]
\theoremstyle{remark}
\newtheorem{remark}{Remark}[section]
\begin{document}
	
	%
	%
	%
	%
	%
	%
	%
	%
	%

\title[The Bolzano-Poincar\'e-Miranda Theorem]{The Bolzano-Poincar\'e-Miranda theorem in infinite dimensional Banach spaces}

\author[Ariza-Ruiz]{David Ariza-Ruiz}
\address[label1]{Departamento de An\'alisis Matem\'atico,
	Universidad de Sevilla, Apdo. 1160, 41080-Sevilla, Spain}
\email{dariza@us.es }
\author[Garcia-Falset]{Jes\'{u}s Garcia-Falset}
\address[label2]{Departament d'An\`{a}lisi Matem\`{a}tica,
	Universitat de Val\`{e}ncia, Dr. Moliner 50, 46100, Burjassot,
	Val\`{e}ncia, Spain}
\email{garciaf@uv.es}
\author[Reich]{Simeon Reich}
\address[label3]{Department of Mathematics,
	The Technion -- Israel Institute of Technology, 32000 Haifa, Israel}
\email{sreich@technion.ac.il (the corresponding author)}


\begin{abstract}
We study the existence of zeroes of mappings defined in Banach spaces.
We obtain, in particular, an extension of the well-known
Bolzano-Poincar\'e-Miranda theorem to infinite dimensional Banach spaces.
We also establish a result regarding the existence of periodic solutions
to differential equations posed in an arbitrary Banach space.
\end{abstract}

\keywords{
Bolzano's theorem, the Bolzano-Poincar\'e-Miranda theorem, zeroes of an
operator.}

\subjclass{Primary 47H10; Secondary 47J05}

\maketitle
%
\section{Introduction}\label{sec:intro}

The first general result regarding the existence of a zero of a function
is Bolzano's theorem (also called the intermediate value theorem), which
can be stated as follows:

\begin{theorem}[Bolzano's theorem]
If $f:[a,b]\to\R$ is a continuous function and $f(a)f(b)<0$, then there
exists a point $x\in (a,b)$ such that $f(x)=0$.
\end{theorem}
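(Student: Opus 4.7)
The plan is to prove this classical result by the supremum (or least upper bound) method, exploiting the order-completeness of $\R$. Without loss of generality I would assume $f(a)<0<f(b)$; the other case reduces to this by considering $-f$.

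First I would introduce the set $S=\{x\in[a,b]:f(x)<0\}$. Because $f(a)<0$, the point $a$ lies in $S$, so $S$ is nonempty; because $S\subseteq[a,b]$, it is bounded above. Hence by the completeness of $\R$ the supremum $c=\sup S$ exists and belongs to $[a,b]$. The core of the argument is then to show $f(c)=0$ by ruling out both strict inequalities via continuity.

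For the step $f(c)\le 0$: by the definition of supremum there is a sequence $(x_n)\subseteq S$ with $x_n\to c$; continuity of $f$ yields $f(c)=\lim f(x_n)\le 0$. For the step $f(c)\ge 0$: if instead $f(c)<0$, continuity of $f$ at $c$ would furnish a $\delta>0$ with $f(x)<0$ for all $x\in(c-\delta,c+\delta)\cap[a,b]$, so one could find points of $S$ strictly greater than $c$, contradicting $c=\sup S$. Here I would also note that $f(b)>0$ forces $c<b$, so that there is room to the right of $c$ inside $[a,b]$. Combining the two inequalities yields $f(c)=0$, and the strict inequalities $f(a)<0$, $f(b)>0$ force $c\in(a,b)$.

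The main obstacle, really the only subtle point, is the second step: one has to use continuity at $c$ \emph{together} with the fact that $c$ is not the right endpoint, in order to extract points of $S$ lying strictly to the right of $c$. As an alternative, one could carry out a bisection argument producing a nested sequence of closed intervals $[a_n,b_n]$ whose lengths tend to $0$ and whose endpoints satisfy $f(a_n)\le 0\le f(b_n)$; the common limit point then is a zero of $f$ by continuity. I would favor the supremum version because it is more compact and is the approach that generalizes most transparently to the higher-dimensional and Banach-space settings developed later in the paper.
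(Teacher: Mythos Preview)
Your supremum argument is correct and is the classical proof of Bolzano's theorem. The paper, however, does something entirely different: it sets $X=\mathbb{R}$, $z=\tfrac{a+b}{2}$, $r=\tfrac{b-a}{2}$, so that $B_r[z]=[a,b]$ and $S_r(z)=\{a,b\}$, defines $\g{x,y}=xy$, observes that $f(a)f(b)<0$ forces $\g{f(x),x-z}=f(x)(x-z)$ to have constant sign on $S_r(z)$, and then invokes Corollary~\ref{gb}. In other words, the paper deduces Bolzano from its own main theorem (hence ultimately from Schauder's fixed point theorem), precisely to illustrate that the general framework subsumes the classical statement. Your approach is far more elementary and self-contained; the paper's approach buys unification rather than simplicity.

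One correction to your closing remark: the supremum argument does \emph{not} generalize to the higher-dimensional and Banach-space settings treated later, because it rests essentially on the total order and order-completeness of $\mathbb{R}$, neither of which is available in $\mathbb{R}^n$ for $n\ge 2$ or in a general Banach space. The route that does generalize is exactly the one the paper takes---recasting the zero problem as a fixed-point problem via a functional $\g{\cdot,\cdot}$ and appealing to Brouwer/Schauder---so your stated reason for preferring the supremum proof is the opposite of the actual situation.
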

The first proofs of this theorem were given independently by
Bolzano~\cite{B} in 1817 and by Cauchy \cite{C} in 1821.
The following theorem is one of the first generalizations of Bolzano's
theorem.
\begin{theorem}\label{Mi}
Let $P := \{x\in\mathbb{R}^n: \ |x_i|\leq L \ {\rm for \ all} \ 1\leq
i\leq
n\}$. Suppose that $F=(f_1,\cdots,f_n):P\to\mathbb{R}^n$ is a
continuous mapping on $P$ such that
\begin{enumerate}
\item[(a)] $f_i(x_1,\cdots,x_{i-1},-L,x_{i+1},\cdots,x_n)>0$
\; \; for \; \; $1\leq i\leq n$,
\item[(b)] $f_i(x_1,\cdots,x_{i-1},L,x_{i+1},\cdots,x_n)<0$
\; \; for \; \; $1\leq i\leq n$.
\end{enumerate}
Then there exists at least one point $x\in P$ such that $F(x)=0.$
\end{theorem}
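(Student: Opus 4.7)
The plan is to derive the Bolzano--Poincar\'e--Miranda theorem from Brouwer's fixed point theorem applied to the cube $P$. Since $F$ is continuous on the compact set $P$, the natural idea is to produce a self-map of $P$ whose fixed points coincide with the zeroes of $F$. A direct candidate such as $x\mapsto x+\alpha F(x)$ need not preserve $P$ (the displacement $\alpha F(x)$ can push a point across a face even though $F$ points inward along $\partial P$), so I would instead truncate componentwise: for any fixed $\alpha>0$, define
\begin{equation*}
G_i(x) := \max\bigl(-L,\,\min(L,\, x_i+\alpha f_i(x))\bigr),\qquad 1\leq i\leq n.
\end{equation*}
Continuity of $F$, $\max$, and $\min$ ensures that $G=(G_1,\dots,G_n)$ is continuous, and by construction $G(P)\subseteq P$. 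Brouwer's fixed point theorem then furnishes $x^{\ast}\in P$ with $G(x^{\ast})=x^{\ast}$.

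The main step is to show that the boundary sign conditions (a) and (b) force every coordinate of $x^{\ast}$ to lie strictly inside $(-L,L)$, which will yield $F(x^{\ast})=0$. I would argue coordinatewise. For fixed $i$, the equation $x^{\ast}_i=\max\bigl(-L,\,\min(L,\,x^{\ast}_i+\alpha f_i(x^{\ast}))\bigr)$ leaves three cases: either $x^{\ast}_i+\alpha f_i(x^{\ast})\in[-L,L]$, giving immediately $f_i(x^{\ast})=0$; or $x^{\ast}_i+\alpha f_i(x^{\ast})>L$, which forces $x^{\ast}_i=L$ together with $f_i(x^{\ast})>0$; or $x^{\ast}_i+\alpha f_i(x^{\ast})<-L$, which forces $x^{\ast}_i=-L$ together with $f_i(x^{\ast})<0$. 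The latter two alternatives directly contradict hypotheses (b) and (a) respectively, so only the first survives. Applying this dichotomy to each coordinate gives $F(x^{\ast})=0$.

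There is no real obstacle beyond choosing the truncated map $G$; once $G$ is in place, Brouwer supplies the fixed point and the sign conditions on the faces of $P$ finish the argument. One convenient feature worth noting is that the scalar $\alpha>0$ may be taken arbitrarily, since the truncation automatically absorbs points for which $x_i+\alpha f_i(x)$ would leave $[-L,L]$, so no smallness assumption on $\alpha$ relative to $\sup_{x\in P}\|F(x)\|$ is required. This clean finite-dimensional argument will also serve as a template for the infinite-dimensional extension pursued later in the paper, with Brouwer's theorem replaced by an appropriate fixed point theorem for continuous self-maps of compact convex subsets of a Banach space.
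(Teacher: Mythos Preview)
Your argument is correct: the truncated map $G$ is a continuous self-map of $P$, Brouwer supplies a fixed point, and the three-case analysis together with the strict inequalities in (a) and (b) forces $f_i(x^\ast)=0$ for every $i$.

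The route, however, is genuinely different from the paper's. You give a direct, self-contained reduction to Brouwer via the componentwise clamped map $G$, in the spirit of Miranda's original 1940 equivalence. The paper instead deduces the theorem as a corollary of its main abstract result (Theorem~3.1) through Corollary~3.3 on convex bodies: one notes that $P$ is a convex body, that each boundary point has some coordinate $x_i=\pm L$, takes the outward normal $a(x)=\pm L\,e_i\in\mathcal{N}_P(x)$, and verifies $\langle -F(x),a(x)\rangle>0$ from (a) and (b). Your approach is more elementary and shorter; the paper's approach is chosen precisely to demonstrate that Theorem~3.1 subsumes the classical Miranda theorem, which is the point of that section. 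One small correction to your closing remark: the logical flow in the paper is the reverse of what you suggest---the infinite-dimensional Theorem~3.1 is established first (via Schauder), and the finite-dimensional Miranda theorem is then recovered from it, rather than the finite-dimensional argument serving as a template for the extension.
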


As recalled in the nice survey paper~\cite{Ma}, Theorem~\ref{Mi}
was stated in 1883 by Poincar\'e in~\cite{P} without proof and
then forgotten for a long time. It was rediscovered by Miranda, who in
1940 showed that it was equivalent to Brouwer's fixed point theorem
(see~\cite{M,V}). Since then, this result has been called
the Bolzano-Poincar\'e-Miranda theorem. Poincar\'e was mainly motivated by
the study of periodic solutions to differential equations arising from the
three-body problem, which can be obtained as fixed points of what is now
called the Poincar\'e map. After his work, many other authors found
applications of Theorem~\ref{Mi} and different generalizations have been
established (see \cite{AFHM,FG,K,V,Vr,S} and references therein).
Theorem~\ref{Mi} also turns up in a recent detailed study of calibration~\cite{H}. 
Since most of the generalizations of the Bolzano-Poincar\'e-Miranda
theorem have been obtained in finite dimensional spaces, the following
question arises naturally: can Theorem~\ref{Mi} be extended to infinite
dimensional spaces? Several partial positive answers to this question
have been given; see, for instance, \cite{A,I,Mo,Sc}. 
In this connection, see also~\cite{RT}, which concerns certain operators 
which are not necessarily continuous. 
In the present paper we answer, {\it inter alia}, this question in the affirmative and obtain a generalization of the Bolzano-Poincar\'e-Miranda theorem
which holds in {\it any} Banach space.

\section{Notations and Preliminaries}

Throughout this paper $(X,\norm{\cdot})$ is a Banach space and $X^*$
is its topological dual. As usual, $B_r[x]$ and $S_r(x)$ denote the
closed ball and the closed sphere of center $x$ and radius $r>0$,
respectively. For a nonempty subset $C$ of $X$, we denote by $\overline{C}$, $\partial C$, $\inte(C)$ and $\co(C)$ the closure, the boundary, the interior and the convex hull of $C$, respectively. For $x\in X$, we denote by $J(x)$
the value of the normalized duality mapping $J$ at $x$, to wit
$J(x):=\{j\in X^*: \ \langle x,j\rangle:=j(x)=\|x\|^2, \ \|j\|=\|x\|\}$.
In general, the normalized duality mapping $J:X\to 2^{X^{*}}$ is
a set-valued mapping. For more details, see~\cite{Cioranescu} and its
review~\cite{Reich}.

\begin{definition}
A Banach space $X$ is said to be {\it smooth} if the normalized duality
mapping $J:X\to 2^X$ is single-valued.
\end{definition}


The following classical fixed point theorems can be found, for instance,
in \cite{De,Sm,Z}. It is worth noting that in each reference we find
different proofs.

\begin{theorem}[Brouwer's fixed point theorem] Let $D \subseteq \mathbb{R}^n$ be a nonempty
bounded, closed and convex subset of $\mathbb{R}^n$. If $f:D\to D$ is
continuous, then it has a fixed point in $D$.
\end{theorem}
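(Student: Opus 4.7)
My plan is to reduce the theorem in two steps to a single, genuinely topological statement---the non-retraction theorem---which I would then establish by a smooth approximation argument.

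\emph{Reduction to the closed unit ball.} Since $D$ is bounded, it is contained in some closed ball $B_R[0]$. Because $D$ is nonempty, closed and convex in $\R^n$, the nearest-point projection $P_D\colon \R^n \to D$ is everywhere defined and continuous. Setting $\tilde f := f \circ P_D\colon B_R[0] \to D \subseteq B_R[0]$, any fixed point $x$ of $\tilde f$ automatically belongs to $D$ (so that $P_D(x)=x$) and therefore satisfies $f(x)=x$. After an obvious rescaling, it suffices to prove the theorem for the closed unit ball $B := B_1[0]$.

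\emph{Reduction to the non-retraction theorem.} Suppose, for contradiction, that $f\colon B \to B$ is continuous and $f(x) \neq x$ for every $x \in B$. Then for each $x \in B$ the ray originating at $f(x)$ and passing through $x$ meets the sphere $S := \partial B$ in a unique point $r(x)$; solving the associated quadratic in the ray parameter shows that $r\colon B \to S$ is continuous, and clearly $r(x)=x$ whenever $x \in S$. Thus $r$ would be a continuous retraction of $B$ onto $S$, and it suffices to show that no such retraction exists.

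\emph{Non-existence of a retraction.} This is where the substantive difficulty lies. I would first mollify a hypothetical continuous retraction and renormalize the image back to $S$ to obtain a $C^1$ retraction $r\colon B \to S$. Consider then the homotopy $h_t(x) := (1-t)x + t\, r(x)$ for $t \in [0,1]$. A change-of-variables computation shows that $\mathrm{Vol}(h_t(B))$ is a polynomial in $t$; for small $t$ the Jacobian of $h_t$ is a small perturbation of the identity, so $h_t$ is a diffeomorphism onto its image and the volume equals $\mathrm{Vol}(B) > 0$, whereas at $t=1$ the image $h_1(B) \subseteq S$ has empty interior and the volume vanishes---a contradiction. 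The main obstacle is precisely this last step: the first two reductions are routine exercises in convex analysis and elementary plane geometry, but the non-retraction theorem from $B$ onto $S$ encodes the genuine topological content of Brouwer's theorem and cannot be avoided; an alternative would be a combinatorial route via Sperner's lemma applied to simplicial refinements, producing a sequence of approximate fixed points whose cluster points, by compactness of $B$, are genuine fixed points of $f$.
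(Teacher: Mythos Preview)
Your sketch is a standard and essentially correct route to Brouwer's theorem: the reduction to the unit ball via the metric projection, the ray-retraction construction, and the Milnor--Rogers volume argument are all sound. Two minor points worth tightening in a full write-up: (i) after mollifying a continuous retraction you must also arrange that the smoothed map still fixes $S$ pointwise---the cleanest fix is to smooth the fixed-point-free map $f$ first and only then build the retraction, which is then automatically $C^1$; (ii) the quantity that is a polynomial in $t$ is $\int_B \det Dh_t(x)\,dx$, not literally $\mathrm{Vol}(h_t(B))$; the two agree only after you have shown that $h_t$ is a diffeomorphism of $B$ onto itself for small $t$, and that step uses the fact that $h_t$ fixes $\partial B$.

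As for comparison with the paper: there is nothing to compare. The paper does \emph{not} prove Brouwer's theorem; it is quoted in the preliminaries as a classical result, with references to standard texts. The only place Brouwer reappears is in the final equivalence theorem, where the implication $(c)\Rightarrow(a)$ derives Brouwer from the paper's main Theorem~3.1 by applying it to $F(x)=x-f(P_C(x))$ on a large Euclidean ball. But since Theorem~3.1 is itself proved via Schauder's theorem (which in turn rests on Brouwer), that implication is part of an equivalence cycle rather than an independent proof. Your proposal therefore supplies something the paper deliberately takes for granted.
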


Let $( X,\norm{\cdot})$ be a real Banach space and let $K$ a nonempty
subset of $X$. A mapping $f:K\to X$ is said to be {\it compact} if $f(C)$
is relatively compact whenever $C$ is a bounded subset of $K$.
If, moreover, $f$ is continuous on $K$, then $f$ is called
{\it completely continuous}. It is clear that every continuous mapping
with a closed domain in a finite dimensional Banach space is, in fact,
completely continuous.

\begin{theorem}[Schauder's fixed point theorem] Let $C$ be a nonempty
closed and convex subset of a Banach space $X$. If $f:C\to C$ is
completely continuous, then it has a fixed point.
\end{theorem}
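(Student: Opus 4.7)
The plan is to reduce Schauder's theorem to Brouwer's fixed point theorem via the classical finite-dimensional approximation argument. The key idea is to approximate $f$ by continuous maps whose range lies in a finite-dimensional compact convex subset of $C$, apply Brouwer to obtain approximate fixed points, and then extract a convergent subsequence using compactness.

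First I would reduce to the case in which $K := \overline{f(C)}$ is compact. When $C$ is bounded this follows at once from complete continuity; in general one may first replace $C$ by the closed convex set $\overline{\co(f(C))}$, which is contained in $C$ by the closedness and convexity of $C$ and which $f$ maps into itself. For each integer $n\geq 1$, the compactness of $K$ yields finitely many points $y_1^{(n)},\dots,y_{k_n}^{(n)}\in f(C)\subseteq C$ whose open $1/n$-neighbourhoods cover $K$.

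Next, I would define the Schauder projection $P_n:K\to C$ by
\[
P_n(x)\;=\;\frac{\sum_{i=1}^{k_n}\mu_i(x)\,y_i^{(n)}}{\sum_{i=1}^{k_n}\mu_i(x)},\qquad \mu_i(x):=\max\!\Bigl\{0,\;\tfrac{1}{n}-\norm{x-y_i^{(n)}}\Bigr\}.
\]
A direct calculation shows that $P_n$ is continuous on $K$, that $\norm{P_n(x)-x}\leq 1/n$ for every $x\in K$, and that the image of $P_n$ lies in the compact convex set $C_n:=\co\{y_1^{(n)},\dots,y_{k_n}^{(n)}\}\subseteq C$, which is contained in a finite-dimensional affine subspace. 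The restriction of $P_n\circ f$ to $C_n$ is then a continuous self-map of a nonempty compact convex subset of a finite-dimensional space, so Brouwer's fixed point theorem produces $x_n\in C_n$ with $P_n(f(x_n))=x_n$.

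Finally, since $\{f(x_n)\}\subseteq K$ is precompact, some subsequence $f(x_{n_j})$ converges to a point $z\in K\subseteq C$. The estimate $\norm{x_{n_j}-f(x_{n_j})}=\norm{P_{n_j}(f(x_{n_j}))-f(x_{n_j})}\leq 1/n_j$ forces $x_{n_j}\to z$ as well, and continuity of $f$ then yields $f(z)=z$. I expect the main obstacle to be verifying the three simultaneous properties of the Schauder projection (continuity, the uniform $1/n$-approximation bound, and the range condition $\operatorname{Im} P_n\subseteq C$); the last of these relies crucially on the convexity of $C$ together with the choice of net centres from inside $f(C)\subseteq C$.
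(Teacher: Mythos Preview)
Your argument is the standard Schauder projection proof and is correct. Note, however, that the paper does \emph{not} give its own proof of this statement: Schauder's theorem appears in Section~2 as a classical preliminary result, stated without proof and with references to \cite{De,Sm,Z}. There is therefore no ``paper's proof'' to compare against; your write-up simply supplies what the paper takes for granted.

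One minor point worth flagging: your reduction in the unbounded case (replacing $C$ by $\overline{\co(f(C))}$) tacitly assumes that $f(C)$ is bounded, which does not follow from complete continuity alone when $C$ is unbounded. Indeed, the theorem as literally stated in the paper---with no boundedness hypothesis on $C$---is false (take $X=\mathbb{R}$, $C=\mathbb{R}$, $f(x)=x+1$). The intended hypothesis, consistent with how the result is applied later in the paper, is that $C$ is bounded (or equivalently that $f(C)$ is relatively compact); under that reading your proof goes through without change.
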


\section{The main result}

Our goal in this section is to study the existence of zeroes for an
operator $f$ defined in a general Banach space $X$. To this end,
we introduce a class of functionals $\g{\cdot,\cdot}:X\times X\to\R$
satisfying the following two conditions:
\begin{enumerate}[label=($C_\arabic*$)]
\item $\g{x,x}>0$ for all $x\in X$ with $x \neq 0$;
\item $\g{\lambda\,x,x}=\lambda\,\g{x,x}$ for all $x\in X$ and $\lambda\in\R$.
\end{enumerate}

\begin{example}
\begin{enumerate}[label=(\arabic*)]
\item If $X$ is a pre-Hilbert space and $\p{\cdot,\cdot}$ is its inner
product, then we can set $\g{\cdot,\cdot}=\p{\cdot,\cdot}$.
\item In general, if $X$ is a Banach space, we may define,
using the duality mapping, a functional $\g{\cdot,\cdot}:X\times X\to\R$
either by
$$\g{x,y}=\p{x,y}_{+}:=\sup_{j(y)\in J(y)}\p{x,j(y)}=\max_{j(y)\in J(y)}\p{x,j(y)}$$
or by
$$\g{x,y}=\p{x,y}_{-}:=\inf_{j(y)\in J(y)}\p{x,j(y)}=\min_{j(y)\in J(y)}\p{x,j(y)}.$$
At this point the interested reader is referred to~\cite{De}, where some
essential properties of
$J$ and $\p{\cdot,\cdot}_{\pm}$ are collected in Propositions~12.3 and~13.1.
In particular, if $X$ is a smooth Banach space,
then $\g{\cdot,\cdot}$ can be given by $\g{x,y}=\p{x,J(y)}$.
In this connection, see also \cite{Cioranescu} and its review
\cite{Reich}.

\end{enumerate}
\end{example}

\begin{theorem}\label{tgb}
Let $U$ be a nonempty, bounded and closed subset of a Banach space $X$
with ${\rm int}(U)\neq\emptyset$ and let $f:U\to X$ be a completely continuous mapping. If there exist a functional ${\small\g{\cdot,\cdot}}:X\times X\to\R$
satisfying $(C_1)$ and $(C_2)$, and a point $z\in\inte(U)$ such that $\g{f(x),x-z}$ has a constant sign for
all $x\in\partial U$, then $0\in\overline{f(U)}$. Moreover, if  $0\notin\partial f(U)\setminus f(U)$, then  $0\in f(U)$.
\end{theorem}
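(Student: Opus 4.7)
The plan is to produce, for each $\lambda>0$, a point $x_\lambda\in U$ satisfying $f(x_\lambda)=(z-x_\lambda)/\lambda$; letting $\lambda\to\infty$ then gives $\|f(x_\lambda)\|\le\mathrm{diam}(U)/\lambda\to 0$, so that $0\in\overline{f(U)}$. The ``moreover'' clause will then follow immediately, because any element of $\overline{f(U)}\setminus f(U)$ cannot lie in $\inte(f(U))$ and hence lies in $\partial f(U)\setminus f(U)$. I assume without loss of generality that $\g{f(x),x-z}>0$ on $\partial U$; the opposite sign is handled by flipping the sign of $\lambda$ below.

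The algebraic heart of the argument is the following consequence of $(C_1)$ and $(C_2)$: for every $y\in X\setminus\{0\}$ and every $\alpha\in\R\setminus\{0\}$, applying $(C_2)$ with the vector $\alpha y$ and the scalar $1/\alpha$ yields
\[
\g{y,\alpha y} \;=\; \tfrac{1}{\alpha}\,\g{\alpha y,\alpha y},
\]
and $(C_1)$ makes the right-hand factor strictly positive, so $\g{y,\alpha y}$ has the same sign as $\alpha$.

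Fixing $\lambda>0$, I would then consider the homotopy $H_t(x):=(x-z)+t\lambda f(x)$, $t\in[0,1]$, on the bounded open set $\Omega:=\inte(U)$. Since $f$ is completely continuous, each $H_t$ is a compact perturbation of the identity, so its Leray-Schauder degree is defined provided $0\notin H_t(\partial\Omega)$. At $t=0$ this holds because $H_0(x)=x-z$ and $z\in\Omega$. For $t\in(0,1]$, suppose $H_t(x)=0$ for some $x\in\partial\Omega\subseteq\partial U$; then $x-z=-t\lambda f(x)$ with $f(x)\neq 0$ (otherwise $x=z\notin\partial U$), and the identity above with $\alpha=-t\lambda<0$ forces $\g{f(x),x-z}<0$, contradicting the sign hypothesis. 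Homotopy invariance therefore gives $\deg(H_1,\Omega,0)=\deg(H_0,\Omega,0)=1\neq 0$, so $H_1$ has a zero $x_\lambda\in\Omega\subseteq U$, as desired.

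The main obstacle I anticipate is squeezing sign information from $\g{\cdot,\cdot}$ despite the absence of bilinearity or symmetry in $(C_1)$--$(C_2)$; the re-parameterization displayed above is what makes the key estimate go through. If one prefers to avoid degree theory, the degree step can equivalently be replaced by the Leray-Schauder fixed-point principle applied to the completely continuous map $G_\lambda(x):=z-\lambda f(x)$, whose fixed points in $U$ coincide with the zeros of $H_1$.
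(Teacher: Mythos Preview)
Your argument is correct and follows the same overall strategy as the paper: for each large parameter you produce a point $x$ in $U$ with $f(x)=(z-x)/\lambda$, and the boundary obstruction is ruled out by exactly the same use of $(C_1)$ and $(C_2)$ (the paper writes $\g{f(x),x-z}=\g{-\tfrac{1}{n\lambda}(x-z),x-z}=-\tfrac{1}{n\lambda}\g{x-z,x-z}<0$, which is your identity in different notation). One small wording issue: ``constant sign'' is read in the paper as $\ge 0$ (or $\le 0$), not strict; your contradiction is strict, so your proof goes through unchanged under the weaker reading.

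Where you genuinely diverge is in the existence step. The paper stays at the level of Schauder's theorem: it encloses $U$ in a ball $\Omega_n$, shows that the ``homotopy set'' $D_n=\{x\in U:\ x=\lambda g_n(x)+(1-\lambda)z\ \text{for some }\lambda\in[0,1]\}$ is compact and disjoint from $\Omega_n\setminus\inte(U)$, uses Urysohn's lemma to cut off $g_n(x):=-nf(x)+z$ outside $\inte(U)$, and then applies Schauder to the resulting self-map of $\Omega_n$. You instead invoke Leray--Schauder degree (or the Leray--Schauder alternative) directly on $\Omega=\inte(U)$. Your route is shorter and more conceptual once degree theory is on the table; the paper's route is deliberately more elementary, relying only on Schauder, which pays off later when the authors prove that Theorem~\ref{tgb} is in fact \emph{equivalent} to Schauder's (and Brouwer's) theorem. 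Using degree theory here would obscure that equivalence, since Leray--Schauder degree already sits above Schauder in strength.
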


\begin{proof}
Without any loss of generality we may assume that $\g{f(x),x-z}\geq0$ for
every $x\in \partial U$. This is because otherwise this expression would
be non-positive and we could use an analogous   reasoning for $-f$.
\par\smallskip
Fix $n\in\N$ and consider the mapping $g_n:U\to X$ defined by
$g_n(x) := -nf(x)+z$. It is clear that $g_n$ is a completely continuous
mapping. Hence there exists a closed ball  $\Omega_n$ such that $U\cup
g_n(U)\subseteq \Omega_n.$
\par\smallskip
Now we may define $D_n:=\big\{x\in U: \ x=\lambda g_n(x)+(1-\lambda)z \
{\rm for \  some} \ \lambda\in [0,1]\big\}$.
We claim that $D_n$ is a nonempty compact subset of $X$.
Indeed, taking $\lambda=0$, we see that $z\in D_n$ and so $D_n$ is
nonempty. We also have $D_n\subseteq \overline{\co}\big(\{z\}\cup
g_n(U)\big)$,
which implies that the set $D_n$ is compact because the mapping $g_n$
is completely continuous.
\par\smallskip
We claim that $D_n\cap(\Omega_n\setminus\,\inte(U))=\emptyset.$
\par\smallskip
Suppose to the contrary that $x\in D_n\cap(\Omega_n\setminus \inte(U))$.
Then $x \in \partial U$ and
by hypothesis, $\g{f(x),x-z}\geq 0$. However, since $x\in D_n$,
there is $\lambda\in (0,1]$ such that $x=\lambda g_n(x)+(1-\lambda)z,$
that is, $x-z=\lambda(g_n(x)-z)=-\lambda nf(x)$. This implies that
$$\g{f(x),x-z}=\g{-\tfrac{1}{n\,\lambda}(x-z),x-z}
=-\frac{1}{n\,\lambda}\g{x-z,x-z}<0,$$
which is a contradiction.
\par\smallskip
Now by Urysohn's lemma (see, for instance, \cite[page 146]{J}),
there exists a continuous function $\varphi:\Omega_n\to [0,1]$
such that $\varphi(x)=1$ whenever $x\in D_n$ and $\varphi(x)=0$
for all $x\in\Omega_n\setminus {\rm int}(U)$.
This allows us to define
$F_n:\Omega_n\to\Omega_n$ by
$$F_n(x):= \varphi(x)g_n(x)+(1-\varphi(x))z.$$
Therefore, since $F_n$ is a continuous and compact self-mapping of a
closed, convex and bounded subset, Schauder's theorem yields the
existence of a point $x_n\in\Omega_n$ such that $F_n(x_n)=x_n$.
\par\smallskip
If $x_n\notin {\rm int}(U)$, then $\varphi(x_n)=0$ and then $x_n=z$,
which contradicts the hypothesis $z\in\inte(U)$.
Thus $x_n\in \inte(U)$ and $x_n=\varphi(x_n)g_n(x_n)+(1-\varphi(x_n))z$,
which implies that $x_n\in D_n$, and so $\varphi(x_n)=1.$ Therefore
$x_n=-nf(x_n)+z$, that is,
$$f(x_n)=\frac{1}{n}(z-x_n).$$
Since $x_n\in U$ and $U$ is bounded, we conclude that
$f(x_n)\to 0$ as $n\to\infty$ and so $0\in\overline{f(U)}$.
\par\smallskip
Finally, if  $0\notin\partial f(U)\setminus f(U)$, then $0 \in f(U)$
because $0\in\overline{f(U)}$.
\end{proof}

\begin{remark}
If, in addition to the assumptions of Theorem~\ref{tgb}, we assume that
$X$ is a reflexive Banach space, $U$ is a nonempty, closed and convex
subset of $X$ with $\inte(U)\neq \emptyset$,
and $f:U\to X$ is weak-strong continuous on $U$, then by Mazur's theorem,
$f(U)$ is a closed set and therefore $0\in f(U)$.
\end{remark}

\begin{corollary}\label{gb}
Given a Banach space $X$, a point $z \in X$ and a number $r>0$,
let $f:B_r[z]\to X$ be a completely continuous mapping. If there exists a 
functional ${\small\g{\cdot,\cdot}}:X\times X\to\R$ satisfying $(C_1)$ and $(C_2)$ such that
$\g{f(x),x-z}$ has a constant sign for all $x\in S_r(z)$, then
$0\in\overline{f(B_r[z])}$. Moreover, if $X$ is reflexive
and $f$ is weak-strong continuous on $B_r[z]$, then  $0\in f(B_r[z])$.
\end{corollary}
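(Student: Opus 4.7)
The plan is to recognize this corollary as a direct specialization of Theorem~\ref{tgb} to $U = B_r[z]$, and then to invoke the preceding Remark for the reflexive conclusion. First I would verify that all the structural hypotheses of Theorem~\ref{tgb} hold for this particular $U$: the closed ball is nonempty, bounded and closed; its interior is the open ball, which is nonempty and contains $z$; and its boundary coincides with the sphere $S_r(z)$. These are standard facts for $r>0$ in any normed space, so $z \in \inte(U)$ and $\partial U = S_r(z)$. With these identifications, the sign hypothesis on $\g{f(x), x-z}$ for $x \in S_r(z)$ is exactly the sign hypothesis of Theorem~\ref{tgb} on $\partial U$, and complete continuity of $f$ is given. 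Consequently, Theorem~\ref{tgb} applies verbatim and yields the first conclusion $0 \in \overline{f(B_r[z])}$.

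For the ``moreover'' statement, I would appeal to the Remark following Theorem~\ref{tgb}. Since $B_r[z]$ is closed, convex and bounded with nonempty interior, as soon as $X$ is reflexive, $B_r[z]$ is weakly compact by Kakutani's theorem (Mazur's theorem ensuring that this bounded closed convex set is weakly closed). A weak-strong continuous $f$ therefore sends $B_r[z]$ to a strongly compact, and hence strongly closed, subset of $X$. Thus $\overline{f(B_r[z])} = f(B_r[z])$, and $0 \in f(B_r[z])$ follows from the first conclusion.

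There is no substantive obstacle here: the corollary is essentially a verbatim specialization of Theorem~\ref{tgb} together with its Remark. The only item that deserves a moment's care is the identification $\partial B_r[z] = S_r(z)$, which is routine in a normed space when $r>0$, since every point of the sphere is approximable both from within and from outside the ball.
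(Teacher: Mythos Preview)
Your proposal is correct and follows essentially the same approach as the paper: the paper's proof simply sets $U := B_r[z]$ and invokes Theorem~\ref{tgb}, with the reflexive ``moreover'' clause handled by the preceding Remark. Your elaboration of the structural checks (that $z\in\inte(U)$ and $\partial U = S_r(z)$) and of why weak--strong continuity on a reflexive space forces $f(B_r[z])$ to be closed is a faithful expansion of that one-line argument.
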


\begin{proof}
It is enough to consider the closed ball
$U := \big\{x\in X: \,\norm{x-z}\leq r\big\}$ and then apply
Theorem~\ref{tgb}.
\end{proof}

\begin{corollary}\label{Pb}
Let $U$ be a nonempty, bounded and closed subset of a Banach space $X$
with $\inte(U)\neq\emptyset$. If $f:U\to X$ is a completely continuous
mapping and there exists a point $z \in \inte(U)$ such that either
$f(x)\notin\{\lambda(x-z): \ \lambda<0\}$ for all $x\in\partial U$ or $f(x)\notin\{\lambda(x-z): \ \lambda>0\}$ for all $x\in\partial U$ , then $0\in\overline{f(U)}$.
\end{corollary}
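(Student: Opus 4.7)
The plan is to reduce Corollary~\ref{Pb} to Theorem~\ref{tgb} by constructing a bracket $\g{\cdot,\cdot}$ tailored so that the geometric ``not on a given ray'' hypothesis becomes the sign hypothesis of Theorem~\ref{tgb}.

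First, I would reduce to the case $f(x) \notin \{\lambda(x-z): \lambda<0\}$ for all $x \in \partial U$: if instead $f(x) \notin \{\lambda(x-z): \lambda>0\}$ on $\partial U$, then $-f$ satisfies the first alternative, and $0 \in \overline{f(U)}$ if and only if $0 \in \overline{(-f)(U)}$, so this reduction is harmless. Next, I would define $\g{\cdot,\cdot}: X \times X \to \R$ piecewise: set $\g{u,0}:=0$ for all $u$; for $v \neq 0$, set $\g{u,v}:=\mu\norm{v}$ when $u = \mu v$ for some (unique) scalar $\mu$, and $\g{u,v}:=\norm{v}$ when $u$ is not a scalar multiple of $v$. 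A short verification gives $(C_1)$, since $\g{x,x}=\norm{x}>0$ for every $x \neq 0$, and $(C_2)$, since $\lambda x$ is always a scalar multiple of $x$ and hence $\g{\lambda x, x} = \lambda\norm{x} = \lambda\g{x,x}$.

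Finally, I would check that $\g{f(x), x-z} \geq 0$ for every $x \in \partial U$. Here $x - z \neq 0$ because $z \in \inte(U)$, so either $f(x) = \mu(x-z)$ with $\mu \geq 0$ (the case $\mu < 0$ being ruled out by hypothesis), yielding $\g{f(x),x-z} = \mu\norm{x-z} \geq 0$, or $f(x)$ is not a scalar multiple of $x-z$, yielding $\g{f(x),x-z} = \norm{x-z} > 0$. Theorem~\ref{tgb} then delivers $0 \in \overline{f(U)}$.

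The only real obstacle is noticing that the duality-based choices $\p{\cdot,\cdot}_{\pm}$ from the preceding example do \emph{not} suffice here: a vector $u$ can lie off the ray $\{\lambda v : \lambda<0\}$ while still satisfying $\p{u,v}_{\pm} < 0$, so the sign condition of Theorem~\ref{tgb} is strictly stronger than the ray condition when $\g{\cdot,\cdot}$ is taken from that standard family. This forces the custom, piecewise choice above; alternatively, one could bypass the $\g{\cdot,\cdot}$ formalism entirely and rerun the proof of Theorem~\ref{tgb} verbatim, observing that the step invoking $(C_1)$ and $(C_2)$ only rules out $f(x)$ being a strictly negative scalar multiple of $x-z$, which is exactly what the hypothesis of the corollary excludes.
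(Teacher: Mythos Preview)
Your proposal is correct and follows essentially the same approach as the paper: both construct a piecewise bracket $\g{\cdot,\cdot}$ that is only sensitive to whether the first argument is a scalar multiple of the second, then apply Theorem~\ref{tgb}. The paper's specific choice is $\g{x,y}:=\langle x,y\rangle_{+}$ when $x=\lambda y$ and $\g{x,y}:=0$ otherwise, whereas you use $\mu\norm{v}$ and $\norm{v}$ in the corresponding cases; these differ only in normalization and in the (harmless) off-line value, and the paper treats the two alternatives directly rather than reducing via $-f$, but the underlying idea is identical.
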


\begin{proof}
In order to obtain this result, it is enough to define the
following functional:

$$\g{x,y} := \left\{\begin{array}{lll}\langle x,y\rangle_+
& {\rm if} x=\lambda y \ {\rm for \ some} \ \lambda\in\mathbb{R}
\\0& {\rm otherwise}.\end{array}\right.$$
It is clear that $\g{\cdot,\cdot}$ satisfies conditions $(C_1)$ and $(C_2)$.

If $f(x)\notin\{\lambda(x-z): \ \lambda<0\}$ for all $x\in\partial U$,
then $\g{f(x),x-z}\geq 0$ for all $x\in \partial U$.
If, on the other hand, $f(x)\notin\{\lambda(x-z): \ \lambda>0\}$ for all
$x\in\partial U$, then we obtain $\g{f(x),x-z}\leq 0.$ In both cases,
the sign of $\g{f(x),x-z}$ is constant. Therefore it follows from
Theorem \ref{tgb} that $0\in\overline{f(U)}$, as asserted.
\end{proof}

\begin{remark}
As a consequence of the previous result, we obtain
Proposition~4 in~\cite{AFHM}, Theorem~1 in~\cite{Mo},
Corollary~3 in~\cite{I} and the Poincar\'e-Bohl theorem~\cite[Theorem
2]{FG}.
\end{remark}

\subsection{Consequences in finite dimensional spaces}

In~\cite[Theorem 3.2]{AFHM} the authors present a generalization
of the Bolzano-Poincar\'e-Miranda theorem which contains
Kantorovich's result~\cite[Theorem~2.1]{AFHM}.
Now we improve upon this result: applying Corollary~\ref{Pb},
we show that it also holds for convex bodies.

Recall that a subset $D$ of $\mathbb{R}^n$ is said to be a
{\it convex body} if it is a compact and convex set with nonempty interior
($D=\overline{\inte(D)}$). Given a point $x\in \partial D$,
we define the {\it  normal cone} to $\inte(D)$ at $x$ by
$\mathcal{N}_{D}(x):=\{\,v\in\R^n: \ \p{v,y-x}< 0,
\ {\rm for \ every} \ y\in\inte(D)\}$, where, as usual,
$\p{\cdot,\cdot}$ denotes the Euclidean scalar product in $\mathbb{R}^n.$
\par\medskip

\begin{corollary}\label{cb}
Let $D$ be a convex body in $\mathbb{R}^n$ endowed with
an arbitrary norm $\|\cdot\|$. Let $f:D\to\mathbb{R}^n$ be a continuous
mapping and assume that for all $x\in \partial D$, there exists a point
$a(x)\in\mathcal{N}_{D}(x)$ such that $\p{f(x),a(x)}\geq 0$. Then $f$ has
a zero in $D.$
\end{corollary}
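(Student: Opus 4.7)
The plan is to apply Corollary~\ref{Pb} with $U=D$ and a suitably chosen interior point $z$. Since $D$ is a convex body in $\mathbb{R}^n$, it is nonempty, bounded, closed, and has nonempty interior; moreover, $f$ is continuous on the compact set $D$, hence completely continuous (as noted after Schauder's theorem, every continuous mapping with a closed domain in a finite dimensional Banach space is completely continuous). Fix any point $z\in\inte(D)$. I would then verify the hypothesis of Corollary~\ref{Pb} in the form $f(x)\notin\{\lambda(x-z):\lambda<0\}$ for every $x\in\partial D$.

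The key observation is the interplay between the normal cone and the interior point $z$. If $x\in\partial D$ and $a(x)\in\mathcal{N}_D(x)$, the defining inequality $\p{a(x),y-x}<0$ for every $y\in\inte(D)$, applied at $y=z$, yields
\[
\p{x-z,a(x)}>0.
\]
Now suppose, for contradiction, that there exist $x\in\partial D$ and $\lambda<0$ with $f(x)=\lambda(x-z)$. Then
\[
\p{f(x),a(x)}=\lambda\,\p{x-z,a(x)}<0,
\]
contradicting the standing assumption $\p{f(x),a(x)}\geq 0$. Hence the required containment property holds on $\partial D$.

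Applying Corollary~\ref{Pb} gives $0\in\overline{f(D)}$. Since $D$ is compact and $f$ is continuous, $f(D)$ is compact, and therefore closed, so $0\in f(D)$, which is the desired conclusion.

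I do not anticipate a serious obstacle here: the argument is essentially a translation between the two languages in which the boundary condition is expressed, namely through the normal cone on the one hand and through the auxiliary functional $\g{\cdot,\cdot}$ (with the choice used in the proof of Corollary~\ref{Pb}) on the other. The only mild subtlety is to make sure the strict sign in the definition of $\mathcal{N}_D$ is used at the single interior point $z$, which is precisely what produces the strict inequality needed to contradict $\p{f(x),a(x)}\geq 0$.
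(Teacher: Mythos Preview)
Your proof is correct and follows essentially the same route as the paper: pick an interior point, use the normal cone condition at that point to rule out $f(x)=\lambda(x-z)$ with $\lambda<0$ on $\partial D$, and invoke Corollary~\ref{Pb}. You are in fact slightly more explicit than the paper in closing the gap from $0\in\overline{f(D)}$ to $0\in f(D)$ via compactness of $f(D)$.
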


\begin{proof}
Let $x_0$ be an element in ${\inte(D)}$. By Corollary \ref{Pb}, it is
enough to show that $f(x)\notin\{\lambda(x-x_0): \ \lambda<0\}$ for all
$x\in \partial D.$ Suppose to the contrary that there exist a point $x\in \partial D$
and a number $\lambda<0$ such that $f(x)=\lambda(x-x_0)$. In this case we have
$0\leq\p{a(x),f(x)}=\lambda\p{a(x),x-x_0}$, which means that
$\p{a(x),x_0-x}\geq 0$ and therefore $a(x)\notin \mathcal{N}_{D}(x)$, which is a contradiction.
\end{proof}

Next we give proofs of Bolzano's theorem and the Bolzano-Poincar\'{e}-Miranda theorem
by employing our results.

\begin{proof}[Proof of Bolzano's theorem]
Let $X=\mathbb{R}$ and $\|\cdot\|=|\cdot|$. It is clear that
$(X,\|\cdot\|)$ is a smooth reflexive Banach space. Taking
$z=\frac{a+b}{2}$ and $r=\frac{b-a}{2}$ we have $B_r[z]=[a,b]$
and $S_r(z)=\{a,b\}$. We define $\g{x,y} := x\,y$. The hypothesis
$f(a)f(b)<0$ implies that  $\g{f(x),x-z}=f(x)(x-z)$ has a constant sign
for all $x\in S_r(z)$. Finally, since  $(X,\|\cdot\|)$ is a finite
dimensional Banach space, we may apply Corollary~\ref{gb} abd obtain the result.
\end{proof}

\begin{proof}[Proof of the Bolzano-Poincar\'{e}-Miranda theorem]
It is clear that $P$ is a convex body in $\mathbb{R}^n$ and that
$\partial P=\{x\in P: \ x_i=\pm L \ {\rm for \ some} \ 1\leq i\leq n\}$.
Moreover, if $x\in \partial P$, then
$x=(x_1,\cdots,x_{i-1},\pm L,x_{i+1},\cdots,x_n)$. In this case, we take
$a(x)=(0,\ldots,0,\pm L,0,\ldots,0)\in \mathcal{N}_{P}(x)$. Now we can
work with the function $g:=-f.$ Bearing in mind conditions~(\emph{a}) and~(\emph{b}), we see that
$$\langle g(x),a(x)\rangle=-f_i(x_1,\cdots,x_{i-1},\pm L,x_{i+1},\cdots,x_n)(\pm L)>0.$$
The above argument shows that all the assumptions of Corollary~\ref{cb}
are satisfied and this allows us to conclude the proof.
\end{proof}

\subsection{Consequences in infinite dimensional spaces}

As we have already mentioned in the introduction,
in~\cite[Theorem 1]{A} Avramescu
gave a partial answer to the question
regarding the Bolzano-Poincar\'e-Miranda
theorem in infinite dimensional spaces. We now show that this result
can be obtained as a corollary of Theorem~\ref{tgb}.

To this end, let $\mathbb{B}$ be a Banach space endowed with an inner product
$\p{\cdot,\cdot}$ (which does not necessarily have any relation with the
norm $\norm{\cdot}$ of $\mathbb{B}$) and let $K$ be an arbitrary
compact topological space.
Set $X:=\{x:K\to \mathbb{B}:x \,\text{ is continuous}\}$, define on $X$
the usual norm $\|x\|_{\infty}=\max\{\|x(t)\|: \ t\in K\}$ and
let $B := \{x\in X: \ \|x\|_{\infty}\leq 1\}$.

\begin{corollary}(\cite[Theorem 1]{A})
Let $f:B\to X$ be a continuous and compact mapping satisfying the following assumptions:
\begin{enumerate}
\item[$\boldsymbol{(a)}$] $0\notin \partial f(B)\setminus f(B),$
\item[$\boldsymbol{(b)}$] $\p{f(x)(t),x(t)}\leq 0$ for all $x\in \partial B$ and for all $t\in K$.
\end{enumerate}
Then there exists at least one element $x\in B$ such that $f(x)=0$.
\end{corollary}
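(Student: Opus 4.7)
The plan is to derive this corollary from Theorem~\ref{tgb} applied with $U = B$ and $z = 0$. The closed unit ball $B$ is nonempty, bounded, closed, and has $0 \in \inte(B)$; $f$ is completely continuous because it is continuous and compact. What remains is to construct a functional $\g{\cdot,\cdot} : X \times X \to \R$ satisfying $(C_1)$ and $(C_2)$ for which hypothesis $\boldsymbol{(b)}$ forces $\g{f(x), x - z} = \g{f(x), x}$ to have a constant sign on $\partial B$.

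The most naive candidate, $\g{u, v} := \max_{t \in K} \p{u(t), v(t)}$, satisfies $(C_1)$ but fails $(C_2)$ for $\lambda < 0$, because pulling a negative scalar out of a maximum turns it into a minimum. To repair this, I would symmetrize and set
$$\g{u, v} := \max_{t \in K} \p{u(t), v(t)} + \min_{t \in K} \p{u(t), v(t)},$$
where both extrema are attained because $K$ is compact and $t \mapsto \p{u(t), v(t)}$ is continuous. For $(C_1)$: if $u \neq 0$ in $X$, there exists $t_0 \in K$ with $u(t_0) \neq 0$, so $\p{u(t_0), u(t_0)} > 0$, which gives $\max_t \p{u(t), u(t)} > 0$, while $\min_t \p{u(t), u(t)} \geq 0$. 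For $(C_2)$: when $\lambda \geq 0$, both $\max$ and $\min$ scale linearly; when $\lambda < 0$, one has $\max_t \lambda \p{u(t), u(t)} = \lambda \min_t \p{u(t), u(t)}$ and symmetrically, so the sum $\max + \min$ is again multiplied by $\lambda$.

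With this functional in hand, hypothesis $\boldsymbol{(b)}$ tells us that $\p{f(x)(t), x(t)} \leq 0$ for every $t \in K$ whenever $x \in \partial B$, and hence both the maximum and minimum of this continuous function over $K$ are nonpositive, so $\g{f(x), x} \leq 0$ on $\partial B$. Theorem~\ref{tgb} then yields $0 \in \overline{f(B)}$, and hypothesis $\boldsymbol{(a)}$ promotes this to $0 \in f(B)$ via the final clause of Theorem~\ref{tgb}.

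The only substantive choice in the proof is the functional $\g{\cdot, \cdot}$; once the symmetric $\max + \min$ construction is identified, everything else is a direct verification of the hypotheses of Theorem~\ref{tgb}. The mild subtlety is noticing that a one-sided extremum will not satisfy $(C_2)$, so the symmetrization is essential.
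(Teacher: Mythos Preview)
Your proof is correct and, like the paper, reduces everything to a single application of Theorem~\ref{tgb} with $U=B$ and $z=0$; the only difference is the choice of functional. The paper sets $\g{x,y}:=\p{x(t_y),y(t_y)}$, where $t_y\in K$ is a point at which $\|y(\cdot)\|$ attains its maximum (the paper's ``$\min$'' over such $t$ is a slip, since $K$ is an arbitrary compact space with no order; any fixed selection does the job). Because $t_y$ depends only on the \emph{second} argument, $(C_2)$ follows at once from linearity of $\p{\cdot,\cdot}$ in the first slot, and no symmetrization is needed. Your $\max+\min$ construction is a pleasant alternative that avoids making an arbitrary selection, and the verification of $(C_1)$--$(C_2)$ is clean. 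The trade-off is that your functional tacitly requires $t\mapsto\p{u(t),v(t)}$ to be bounded on $K$ for every $u,v\in X$ (so that $\g{\cdot,\cdot}$ actually lands in $\R$), which amounts to continuity of the inner product with respect to the Banach norm of~$\mathbb{B}$; the paper explicitly allows $\p{\cdot,\cdot}$ to have no relation to $\|\cdot\|$, and its pointwise functional sidesteps this issue because it only ever evaluates the inner product at a single point.
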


\begin{proof}
Given $y\in X$, since $K$ is compact, it is clear that there exists
$t_0\in K$ such that $\|y\|_{\infty}=\|y(t_0)\|.$
Let $t_y:=\min\big\{t\in K:\norm{y(t)}=\norm{y}_\infty\big\}$.
We define $\g{\cdot,\cdot}:X\times X\to\R$ by $\g{x,y}:=\p{x(t_y),y(t_y)}$.
It is not difficult to see that $\g{\cdot,\cdot}$ satisfies conditions
$(C_1)$ and $(C_2)$. Thus, if $x\in \partial B$, we have
$\g{f(x),x}=\p{f(x)(t_x),x(t_x)}\leq 0$ by using assumption (\emph{b}).
This means that $\g{f(x),x}$ has a constant sign for all $x\in \partial B$.
Now, since $f$ satisfies assumption (\emph{a}), we can apply
Theorem~\ref{tgb} to conclude that $0\in f(B)$, as asserted.
\end{proof}

In \cite{Sc} the author established a version of the 
Bolzano-Poincar\'e-Miranda theorem in the Hilbert space $\ell_2$. 
We are now going to see that such a result can be obtained as a consequence 
of Theorem \ref{tgb}. For the sake of simplicity, we prove the 
following version of the result. 

\begin{corollary}(\cite[Theorem 2.3]{K})
Let $C:=\{x\in\ell_2: \ |x_k|\leq k^{-1}\}$ be the Hilbert cube 
and let $f:C\to \ell_2$ be continuous. If for all $k\in\mathbb{N}$, we 
have
$$
f_k(x_1,\ldots,x_{k-1},-\tfrac{1}{k},x_{k+1},\ldots)\geq 0
\quad\mbox{and}\quad
f_k(x_1,\ldots,x_{k-1},\tfrac{1}{k},x_{k+1},\ldots)\leq 0,$$
then $f$ has a zero.
\end{corollary}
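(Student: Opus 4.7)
The plan is to reduce to a finite-dimensional problem and then pass to the limit via the compactness of the Hilbert cube, because Theorem~\ref{tgb} cannot be applied directly to $C$: the Hilbert cube $C$ has empty interior in $\ell_2$. Indeed, given any $x\in C$ and any $\varepsilon>0$, choosing $k$ with $2/k<\varepsilon$ and displacing the $k$-th coordinate of $x$ by a suitable amount of absolute value at most $2/k$ produces a point within $\varepsilon$ of $x$ which violates the constraint $|y_k|\le 1/k$, so no $\ell_2$-ball around $x$ lies inside $C$.

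For the reduction, for each $n\in\N$ I will set $C_n=\{x\in\R^n:|x_k|\le 1/k,\ 1\le k\le n\}$, which is a convex body in $\R^n$, and use the (Euclidean) isometric embedding $\iota_n(x_1,\dots,x_n)=(x_1,\dots,x_n,0,0,\dots)\in\ell_2$, which maps $C_n$ into $C$. Then I will define the continuous map $f^{(n)}\colon C_n\to\R^n$ by $f^{(n)}(x)=\bigl(f_1(\iota_n(x)),\dots,f_n(\iota_n(x))\bigr)$ and apply Corollary~\ref{cb} (a consequence of Theorem~\ref{tgb}) to $g^{(n)}:=-f^{(n)}$. To verify its hypotheses, given $x\in\partial C_n$, I pick any index $k\le n$ with $|x_k|=1/k$ and set $a(x)=\sign(x_k)\,e_k$; since $\p{a(x),y-x}=\sign(x_k)(y_k-x_k)<0$ for every $y\in\inte(C_n)$, we have $a(x)\in\mathcal{N}_{C_n}(x)$. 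The sign conditions assumed on $f$ then yield, in both cases $x_k=\pm1/k$, that $\p{g^{(n)}(x),a(x)}=-\sign(x_k)\,f_k(\iota_n(x))\ge 0$. Corollary~\ref{cb} therefore provides $x^{(n)}\in C_n$ with $g^{(n)}(x^{(n)})=0$, i.e., $f_k(\iota_n(x^{(n)}))=0$ for every $1\le k\le n$.

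Finally, I will pass to the limit. Setting $y^{(n)}:=\iota_n(x^{(n)})\in C$ and using the classical compactness of the Hilbert cube in $\ell_2$, I extract a subsequence $y^{(n_j)}\to y^*\in C$; the continuity of $f$ yields $f(y^{(n_j)})\to f(y^*)$ in $\ell_2$, and hence coordinate-wise. For each fixed $k$, once $n_j\ge k$ the $k$-th coordinate of $f(y^{(n_j)})$ vanishes, so $f_k(y^*)=0$. Therefore $f(y^*)=0$, as claimed. The main obstacle is precisely the opening observation that $\inte(C)=\emptyset$ in $\ell_2$, which blocks any direct appeal to Theorem~\ref{tgb} and forces the finite-dimensional detour through the sections $C_n$; once one commits to them, compactness of $C$ glues the sections together automatically.
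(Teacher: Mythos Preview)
Your argument is correct, but it follows a genuinely different route from the paper's. The paper does \emph{not} pass through finite dimensions at all: instead it extends $f$ to the closed ball $B_l[0]\supseteq C$ (with $l=\|\{1/k\}\|_2$) by composing with the metric projection $P_C$, so that $F:=f\circ P_C$ is completely continuous because $C$ is compact; it then applies Theorem~\ref{tgb} directly to $F$ on $B_l[0]$ with a tailor-made functional $\g{x,y}:=x_{i_y}y_{i_y}$, where $i_y$ is chosen so that any $x\in S_l(0)$ automatically has $|x_{i_x}|\ge 1/i_x$, forcing $\g{F(x),x}\le 0$ on the sphere. Your approach sidesteps the need for such a functional by dropping to the finite-dimensional sections $C_n$, invoking only the finite-dimensional Corollary~\ref{cb}, and then exploiting the compactness of $C$ a second time---now to extract a limit of the approximate zeros rather than to make an extended map compact. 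Both arguments ultimately lean on the compactness of the Hilbert cube, but the paper's version better illustrates the paper's thesis that Theorem~\ref{tgb} applies \emph{directly} in infinite dimensions (once one enlarges the domain to recover nonempty interior), whereas your version is arguably more elementary and would work even if one only had the classical Poincar\'e--Miranda theorem in $\R^n$ at hand.
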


\begin{proof}
Set $l:=\|\{\frac{1}{k}\}_{k\in\N}\|_2$ and consider $B_{l}[0].$ 
It is clear that $C\subseteq B_{l}[0].$ Let $P_C$ denote the metric 
projection onto $C$. Given $x=\{x_k\}_{k\in\N}\in B_l[0]$, it is not 
difficult to see that
$$
P_C(x)_{k}=\left\{\begin{array}{cll}
x_k& \mbox{if} & |x_k|<\frac{1}{k} \\[.5em]
\frac{1}{k}& \mbox{if} & x_k\geq\frac{1}{k}\\[.5em]
-\frac{1}{k}& \mbox{if} & x_k\leq-\frac{1}{k}.
\end{array}\right.$$

Consider now the properties of the mapping $F:B_{l}[0]\to\ell_2$ 
defined by $F(x):=f(P_C(x))$. 
\par\smallskip
Since $\ell_2$ is a Hilbert space, $P_C$ is a continuous (even 
nonexpansive) mapping. Hence, 
since $C$ is a compact set and  $C\subseteq B_{l}[0]$, 
the mapping $F$ is completely continuous and $F(B_l[0])$ is also a 
compact set.
\par\smallskip
Given $y=\{y_k\}_{k\in\N}\in \ell_2$, we define
$$
i_y:=\left\{\begin{array}{lll}
\min\{k\in\mathbb{N}: \ |y_k|=\|y\|_{\infty}\} & \mbox{if} & |y_k|<k^{-1} \quad\forall\;k\in \mathbb{N} \\[.5em]
\min\{k\in\mathbb{N}: \ |y_k|\geq k^{-1}\} & \mbox{if} 
& \exists\,k\in\mathbb{N} \; \mbox{ with } \;  |y_k|\geq k^{-1}.
\end{array}\right.$$
Now we define $\g{x,y}:=x_{i_{y}}y_{i_{y}}$. 
It's clear that $\g{\cdot,\cdot}$ satisfies conditions $(C_1)$ and $(C_2).$ 
Moreover, if $x\in S_{l}(0)$, then  
$|x_{i_{x}}|\geq\frac{1}{i_{x}}$ and since
$\g{F(x),x}=f_{i_{x}}(P_C(x))x_{i_{x}}$, we see that $\g{F(x),x}\leq 0$ 
for all $x\in S_l(0)$. These conditions guarantee that we can apply 
Theorem~\ref{tgb}. Hence there exists a point $x_0\in B_l[0]$ such that 
$F(x_0)=0$. This, in its turn, implies that $f$ has a zero in $C$,
as claimed. 
\end{proof}

\section{Applications}

In this section we apply the results of the previous section in order to 
establish the existence of at least one solution to systems of nonlinear 
equations and also the existence of a periodic solution in a general 
Banach space to the differential equation $x'(t)=f(t,x(t))$, where 
$t\in[0,T]$.

\subsection{An application to systems of nonlinear equations}

\begin{theorem}\label{thm:appl:finite}
Let $L$ be a continuous linear mapping from $(\R^n,\norm{\cdot})$ into 
itself. Assume that its inverse $L^{-1}$ exists and that 
$\ell:=\min\big\{\norm{L(x)}:\norm{x}=1\big\}>0$. 
Let $g:\R^n\to\R^n$ be a continuous mapping. If there exists $R>0$ 
such that $\norm{g(x)}\leq \ell\,R$ for all $x\in B_R[0]$, 
then the nonlinear equation $L(x)+g(x)=0$ has at least one solution in~$B_R[0]$.
\end{theorem}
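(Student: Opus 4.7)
The plan is to reduce the equation $L(x)+g(x)=0$ to finding a zero of an auxiliary map to which Corollary~\ref{Pb} (with $z=0$ and $U=B_R[0]$) applies. Define
\[
h:B_R[0]\longrightarrow\R^n,\qquad h(x):=x+L^{-1}(g(x)).
\]
Since $L^{-1}$ is a continuous linear mapping and $g$ is continuous, $h$ is continuous; and in $\R^n$ continuous mappings on the bounded closed set $B_R[0]$ are automatically completely continuous. A zero $x_0\in B_R[0]$ of $h$ yields $L(x_0)=-g(x_0)$, i.e.\ a solution of the original equation, so it suffices to produce such an $x_0$.

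The first substantive step is to translate the hypothesis $\ell=\min\{\|L(x)\|:\|x\|=1\}>0$ into the operator-norm bound $\|L^{-1}(y)\|\leq \|y\|/\ell$ for every $y\in\R^n$. Indeed, writing $y=L(u)$ and using homogeneity, $\|y\|=\|L(u)\|\geq \ell\|u\|$, hence $\|L^{-1}(y)\|=\|u\|\leq\|y\|/\ell$. Combined with the hypothesis $\|g(x)\|\leq \ell R$ on $B_R[0]$, this gives the key a priori estimate
\[
\|L^{-1}(g(x))\|\leq R\qquad\text{for all }x\in B_R[0].
\]

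The second step is to verify the sign condition of Corollary~\ref{Pb}, namely that $h(x)\notin\{\lambda x:\lambda<0\}$ for every $x\in S_R(0)$. Suppose, toward a contradiction, that $h(x)=\lambda x$ for some $x$ with $\|x\|=R$ and some $\lambda<0$. Then
\[
L^{-1}(g(x))=(\lambda-1)x,
\]
so $\|L^{-1}(g(x))\|=|\lambda-1|R>R$, contradicting the estimate of the previous step. Hence all the hypotheses of Corollary~\ref{Pb} applied to $h$, $U=B_R[0]$, $z=0$ are satisfied, and we conclude $0\in\overline{h(B_R[0])}$.

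Finally, since we are in $\R^n$, the set $B_R[0]$ is compact and $h$ is continuous, so $h(B_R[0])$ is compact, in particular closed. Therefore $0\in h(B_R[0])$, i.e.\ there exists $x_0\in B_R[0]$ with $h(x_0)=0$, which is the desired solution of $L(x)+g(x)=0$. There is no real obstacle here once the auxiliary map $h$ is chosen correctly: the only mildly delicate point is the sharp estimate $\|L^{-1}\|\leq 1/\ell$, which is what makes the linear operator $L$ dominate the nonlinear perturbation $g$ precisely at the boundary $S_R(0)$.
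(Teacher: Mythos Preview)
Your proof is correct and follows essentially the same route as the paper: the same auxiliary map $x\mapsto x+L^{-1}(g(x))$, the same operator bound $\|L^{-1}\|\le 1/\ell$, and the same boundary estimate $\|L^{-1}(g(x))\|\le R$. The only cosmetic difference is that the paper invokes Corollary~\ref{gb} with the functional $\g{x,y}=\langle x,y\rangle_+$ (so the finite-dimensional ``Moreover'' clause directly yields a zero), whereas you invoke Corollary~\ref{Pb} via the ray-avoidance condition and then close with the compactness of $h(B_R[0])$; both reduce to the same inequality on $S_R(0)$.
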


\begin{proof}
Note that $L(x)+g(x)=0$ is equivalent to $x+L^{-1}(g(x))=0$. 
Thus, defining $F:\R^n\to\R^n$ by $F(x):= x+L^{-1}(g(x))$, we see 
that it is enough to prove the existence of a zero of $F$ in $B_R[0]$.
\par\medskip
It is clear that $F$ is continuous and, therefore, compact. 
We claim that $\norm{L^{-1}(x)}\leq \frac{1}{\ell}\,\norm{x}$ for all 
$x\in\R^n$. To see this, observe that the inequality 
$$
\norm{x}=\norm{L(L^{-1}(x))}\geq \ell\,\norm{L^{-1}(x)}
$$
implies that $\norm{L^{-1}(x)}\leq \dfrac{1}{\ell}\,\norm{x}$.
\par\medskip
Furthermore, if we set $\g{x,y}=\p{x,y}_+$, then for each $x\in 
S_R[0]$, we have 
\begin{equation*}
\begin{split}
\g{F(x),x}
&\geq\p{x+L^{-1}(g(x)),j}=R^2+\p{L^{-1}(g(x)),j} \\
&\geq R^2-\norm{L^{-1}(g(x))}\,R \geq R^2-\frac{R}{\ell}\,\norm{g(x)}
\geq 0,
\end{split}
\end{equation*}
where $j \in J(x)$. Hence, by Corollary~\ref{gb}, there exists a 
point $x_0 \in B_R[0]$ such that $F(x_0)=0$, that is, $L(x_0)+g(x_0)=0$.
\end{proof}

\begin{corollary}\label{cor:appl:finite}
Let $L$ be a continuous linear mapping from $(\R^n,\norm{\cdot})$ into 
itself. Assume that its inverse $L^{-1}$ exists and that  
$\ell:=\min\big\{\norm{L(x)}:\norm{x}=1\big\}>0$. 
Let $g:\R^n\to\R^n$ be a continuous mapping. If there exist numbers  
$\alpha,\beta\geq 0$, with $\alpha<\ell$, such that 
$\norm{g(x)}\leq\alpha\,\norm{x}+\beta$ for all $x\in \R^n$, 
then the nonlinear equation $L(x)+g(x)=0$ has at least one solution 
in the ball $B_R[0]$, where $R=\beta/(\ell-\alpha)$.
\end{corollary}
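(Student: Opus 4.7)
The plan is to reduce this corollary directly to Theorem~\ref{thm:appl:finite} by calibrating its hypothesis to the given linear growth bound on~$g$.

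First I would set $R := \beta/(\ell-\alpha)$, which is well defined and nonnegative because $\alpha<\ell$ and $\beta\geq 0$. This choice is made precisely so that the identity $\alpha R + \beta = \ell R$ holds. Combining it with the growth hypothesis, for every $x\in B_R[0]$ one obtains
\[
\norm{g(x)}\leq \alpha\,\norm{x}+\beta \leq \alpha R+\beta = \ell R,
\]
which is exactly the uniform bound assumed in Theorem~\ref{thm:appl:finite}. Invoking that theorem then produces a point $x_0\in B_R[0]$ with $L(x_0)+g(x_0)=0$, which is the desired conclusion.

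Since the hard analytic work is already contained in Theorem~\ref{thm:appl:finite}, there is essentially no obstacle here; the only book-keeping concerns the degenerate case $\beta=0$, in which $R=0$ and $B_R[0]=\{0\}$. In that situation the growth inequality reduces to $\norm{g(x)}\leq\alpha\norm{x}$, which forces $g(0)=0$, so the origin itself solves the equation. Alternatively, when $\beta=0$ one could apply Theorem~\ref{thm:appl:finite} with any strictly positive radius, since $\alpha R<\ell R$ would then hold automatically on $B_R[0]$. Beyond this minor calibration, the proof is a one-line application of the previous theorem.
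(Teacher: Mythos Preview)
Your argument is correct and matches the paper's own proof essentially line for line: set $R=\beta/(\ell-\alpha)$, verify $\norm{g(x)}\le\alpha R+\beta=\ell R$ on $B_R[0]$, and invoke Theorem~\ref{thm:appl:finite}. Your explicit treatment of the degenerate case $\beta=0$ is a nice bit of extra care that the paper omits.
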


\begin{proof}
Since $\ell-\alpha>0$ by assumption, we may indeed consider  
$R := \beta/(\ell-\alpha)$. To see that 
the equation $L(x)+g(x)=0$ has at least one solution in the ball $B_R[0]$, 
note that for each point $x\in B_{R_0}[0]$, we have
$$
\norm{g(x)}\leq \alpha\,\norm{x}+\beta\leq \alpha\,R+\beta\leq \alpha\,R_0+(\ell-\alpha)\,R=\ell\,R.
$$
Therefore the result follows from the preceding theorem.
\end{proof}

\begin{example}
The system of nonlinear equations
$$
(S)\left\{
\begin{array}{l}
-2x+7y+4y\,\cos(x+2y)=3 \\[.5em]
7x-2y+3x\,\sin(x-3y)=2
\end{array}
\right.
$$
has at least one solution in the square $[-3,3]\times[-3,3]$. To see 
this, it is enough to 
use the preceding result for the plane $\R^2$ endowed with the norm 
$\norm{(x,y)}_\infty=\max\{\abs{x},\abs{y}\}$, the continuous linear mapping 
$L(x,y)=(-2x+7y,7x-2y)$ and the continuous nonlinear mapping 
$g(x,y)=(4y\,\cos(x+2y)-3,3x\,\sin(x-3y)-2)$. It is not difficult to see 
that $\ell=5$ and $\norm{g(x,y)}_\infty\leq 4\norm{(x,y)}_\infty+3$ 
for all $(x,y)\in\R^2$. Thus, by Corollary~\ref{cor:appl:finite}, there 
indeed exists at least one solution of~$(S)$ in 
$B_3[0]=[-3,3]\times[-3,3]$. 
\end{example}

\subsection{An application to differential equations}

Let $(X,\norm{\cdot})$ be a Banach space and let $f:[0,T]\times X\to X$ be 
a continuous mapping which is Lipschitz with respect to the second variable, 
that is, there exists a constant $L>0$ such that 
$\norm{f(t,x)-f(t,y)}\leq L\,\norm{x-y}$ for all $t\in[0,T]$ and $x,y\in X$.
\par\medskip
Consider the differential equation
\begin{equation}\label{eq:dif.1}
x'(t)=f(t,x(t)), \qquad t\in[0,T].
\end{equation}
According to the Picard-Lindel\"{o}f theorem (see~\cite[Chapter 3]{Z}), 
given a point 
$a\in X$, there exists a unique continuous and differentiable function 
$x_a:[0,T]\to X$ satisfying \eqref{eq:dif.1} with $x(0)=a$. Moreover, this 
solution satisfies
$$x_a(t)=a+\int_0^t f(s,x_a(s))\,ds.$$
Now define the mapping $F:X\to X$ by $F(a) := -a+x_a(T)$.
\par\medskip
We claim that $F$ is continuous. Indeed, given $a,b\in X$, we have, for 
all $t\in[0,T]$, 
\begin{equation*}
\begin{split}
\norm{x_a(t)-x_b(t)}
&\leq\norm{a-b}+\int_0^t \norm{f(s,x_a(s))-f(s,x_b(s))}\,ds \\
&\leq\norm{a-b}+\int_0^t L\,\norm{x_a(s)-x_b(s)}\,ds.
\end{split}
\end{equation*}
By using Gronwall's inequality (see, for instance, \cite[Proposition 
3.10]{Z}), we obtain $\norm{x_a(t)-x_b(t)}\leq\norm{a-b}\,e^{L\,t}
$ for all $t\in[0,T]$. Therefore 
$$\norm{F(a)-F(b)}\leq\norm{a-b}+\norm{a-b}\,e^{L\,T}=(1+e^{L\,T})\,\norm{a-b}.$$
Thus $F$ is (even Lipschitz) continuous.
\par\medskip
Suppose that there exists a number $R>0$ such that for each point $x\in X$ 
with $\norm{x}=R$, there exists $j\in J(x)$ with $\p{f(t,x),j}\leq0$ 
for all $t\in[0,T]$.
\par\medskip
In this case, bearing in mind Kato's differentiation rule, we get
$$\norm{x(t)}\,\frac{d}{dt}\norm{x(t)}=\p{x'(t),j},$$
which implies that
$$\frac{d}{dt}\norm{x(t)}^2=2\p{f(t,x(t)),j}$$
for any $j\in J(x(t))$, whenever $x$ is a solution to the 
differential equation \eqref{eq:dif.1}.
\par\medskip
We claim that if $a\in B_R[0]$, then $x_a(T)\in B_R[0]$. In order to prove 
this, we consider the following set:
$$K := \Big\{t\in[0,T]:\norm{x_a(s)}\leq R \mbox{ for all 
}s\in[0,t]\Big\}.$$
It is clear that $K$ is nonempty because $0\in K$. We claim that 
$\sup K=T$. Suppose to the contrary that $t_0:=\sup K<T$. 
Then $\norm{x_a(t_0)}=R$. Hence,
$$\frac{d}{dt}\norm{x(t)}^2{\Big|}_{t=t_0}=2\p{f(t_0,x(t_0)),j}<0,$$
which implies that the function  $t\to \|x_a(t)\|$ is a decreasing function 
in the vicinity of the point $t_0$. Thus there exists $\varepsilon>0$ 
such that $\norm{x_a(t_0+\varepsilon)}\leq \norm{x_a(t_0)}=R$, 
which is a contradiction. Hence $\norm{x_a(T)}\leq R$ for all $a\in 
B_R[0]$, as claimed.
\par\medskip
Next, we define the functional $\g{\cdot,\cdot}:X\times X\to\R$ by 
$\g{x,y} := \max_{j\in J(y)}\p{x,j}$. Note that if $a\in\partial B_R[0]$, 
then for any $j\in J(a)$, we have
$$\p{F(a),j}=\p{-a+x_a(T),j}\leq-\norm{a}^2+\norm{x_a(T)}\,\norm{a}\leq0.$$
Thus the sign of $\g{F(a),a}$ is constant for all $a\in\partial B_R[0]$.
\par\medskip
In order to prove that the mapping $F$ is compact, it is enough to assume 
that the mapping $f:[0,T]\times X\to X$ is compact because in this case 
we have 
$$
F(a)=x_a(T)-a=\int_0^T f(s,x_a(s))\,ds\in T\,\overline{\co}\Big\{f([0,T]\times B_R[0])\Big\}$$
for all $a\in B_R[0]$, which implies that $F(B_R[0])$ is relatively compact.
\par\medskip
Therefore, using Theorem~\ref{tgb} we see that 
$0 \in \overline{F(B_R[0])}$. This means that there exists a sequence 
$\{a_n\}_{n\in\N}$ in $B_R[0]$ such that $F(a_n)\to 0$ as $n\to\infty$. Since
\begin{equation}\label{eq:dif.2}
F(a_n)=-a_n+\int_0^T f(s,x_{a_n}(s))\,ds
\end{equation}
for all $n\in\N$, and $\left\{\int_0^T f(s,x_{a_n}(s))\,ds\right\}_{n\in\N}$ 
is relatively compact, there exists a subsequence $\{a_{n_k}\}_{k\in\N}$ 
of $\{a_n\}_{n\in\N}$ such that
$$\int_0^T f(s,x_{a_{n_k}}(s))\,ds\to b\in B_R[0],$$
which implies, when combined with~\eqref{eq:dif.2}, that $a_{n_k}\to b$. 
Hence, using the continuity of $F$, we obtain that $F(a_{n_k})\to F(b)$. 
But, since $F(a_{n_k})\to 0$, we deduce that $F(b)=0$. 
This means that $x_b(0)=x_b(T)$, that is, there exists a periodic solution 
of differential equation~\eqref{eq:dif.1}.
\par\medskip
In other words, we have just proved the following result.

\begin{theorem}\label{thm:appl:infinite}
Let $X$ be a Banach space. Then the differential equation $x'(t)=f(t,x(t))$, 
where $t\in[0,T]$, has a periodic solution whenever $f:[0,T]\times 
X\to X$ is a completely continuous mapping which is Lipschitzian with respect 
to the second variable and there exists $R>0$ such that 
$\langle f(t,x),j(x)\rangle \le 0$ for all $t\in [0,T]$,  
all $\|x\|=R$ and for some $j(x)\in J(x)$.
\end{theorem}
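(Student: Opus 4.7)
The plan is to recast $T$-periodic solutions of $x'(t) = f(t, x(t))$ as zeroes of the Poincar\'e-type operator $F: B_R[0] \to X$ defined by $F(a) := -a + x_a(T)$, where $x_a$ is the unique solution with $x_a(0) = a$ guaranteed by the Picard-Lindel\"of theorem (the Lipschitz hypothesis on $f$ in the second variable is precisely what makes $x_a$ well defined and $F$ even sensibly defined). Any $b \in B_R[0]$ with $F(b) = 0$ satisfies $x_b(T) = x_b(0)$, hence produces a $T$-periodic solution. I would then verify that $F$ meets the hypotheses of Theorem~\ref{tgb} on $U = B_R[0]$ with $z = 0$. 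Continuity (in fact Lipschitz continuity) of $F$ drops out of Gronwall's inequality applied to $\norm{x_a(t) - x_b(t)}$, and complete continuity follows from writing $F(a) = \int_0^T f(s, x_a(s))\, ds$ and noting that this integral lies in $T\,\overline{\co}\bigl(f([0,T]\times B_R[0])\bigr)$, which is relatively compact because $f$ is completely continuous.

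The main obstacle is to arrange the sign condition on $\partial B_R[0]$ demanded by Theorem~\ref{tgb}. I would take $\g{x, y} := \max_{j \in J(y)} \p{x, j}$, which manifestly satisfies $(C_1)$ and $(C_2)$; the target inequality $\g{F(a), a} \leq 0$ for $\norm{a} = R$ then reduces, via Cauchy-Schwarz, to the invariance statement $\norm{x_a(T)} \leq R$ whenever $a \in B_R[0]$. This invariance is the crux, and I would establish it using Kato's differentiation rule, which yields $\tfrac{d}{dt}\norm{x_a(t)}^2 = 2\,\p{f(t, x_a(t)), j}$ for $j \in J(x_a(t))$; combining this with the hypothesis $\p{f(t, x), j(x)} \leq 0$ on the sphere $\norm{x} = R$, one runs a sup-argument on $K := \{t \in [0, T]: \norm{x_a(s)} \leq R \text{ for all } s \in [0, t]\}$ and observes that if $t_0 := \sup K$ were strictly less than $T$, then $\norm{x_a(t_0)} = R$ would force $\tfrac{d}{dt}\norm{x_a(t)}^2 \leq 0$ at $t_0$, making $\norm{x_a(\cdot)}$ non-increasing just past $t_0$ and contradicting the definition of $t_0$. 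Once invariance is secured, for each $a \in \partial B_R[0]$ and every $j \in J(a)$, $\p{F(a), j} = -R^2 + \p{x_a(T), j} \leq -R^2 + \norm{x_a(T)}\,R \leq 0$, giving $\g{F(a), a} \leq 0$, i.e.\ a constant sign along the boundary.

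With all hypotheses of Theorem~\ref{tgb} verified, the conclusion $0 \in \overline{F(B_R[0])}$ yields a sequence $\{a_n\} \subset B_R[0]$ with $F(a_n) \to 0$. To upgrade this to an actual zero (Theorem~\ref{tgb} only gives closure) I would exploit compactness once more: the identity $a_n = -F(a_n) + \int_0^T f(s, x_{a_n}(s))\, ds$ together with relative compactness of the integral term forces a convergent subsequence $a_{n_k} \to b \in B_R[0]$, and the continuity of $F$ then gives $F(a_{n_k}) \to F(b)$, so that $F(b) = 0$, producing the sought periodic solution.
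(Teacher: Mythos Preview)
Your proposal is essentially identical to the paper's own argument: the same Poincar\'e-type operator $F(a)=-a+x_a(T)$, the same functional $\g{x,y}=\max_{j\in J(y)}\p{x,j}$, the same invariance proof via Kato's rule and the sup-set $K$, the same compactness argument through $T\,\overline{\co}\bigl(f([0,T]\times B_R[0])\bigr)$, and the same subsequence extraction to upgrade $0\in\overline{F(B_R[0])}$ to an actual zero. Even the minor soft spot---inferring that $\norm{x_a(\cdot)}$ cannot exceed $R$ just past $t_0$ from a sign condition on the derivative at the single point $t_0$---is shared with the paper's presentation.
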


\subsection{A final remark}

It may be worth noting that, instead of using Theorem~\ref{tgb},  
one can prove Theorem~\ref{thm:appl:finite}  
by appealing to  Brouwer's fixed point and that one 
can deduce Theorem~\ref{thm:appl:infinite} 
from Schauder's fixed point theorem (by proving that the  
Poincar\'e map, that is, $a \mapsto x_a(T)$, has a fixed point). 
One of the reasons for these facts is the following result, which provides  
an equivalence between these fixed point theorems and our theorem.

\begin{theorem}
The following theorems are equivalent:
\begin{enumerate}
\item[$\boldsymbol{(a)}$] Brouwer's fixed point theorem;
\item[$\boldsymbol{(b)}$] Schauder's fixed point theorem;
\item[$\boldsymbol{(c)}$] Theorem~\ref{tgb}.
\end{enumerate}
\end{theorem}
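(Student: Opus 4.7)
The plan is to establish the cycle of implications $(a)\Rightarrow (b)\Rightarrow (c)\Rightarrow (a)$.

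The implication $(b)\Rightarrow (c)$ is essentially free: the proof of Theorem~\ref{tgb} given above already invokes Schauder's fixed point theorem in order to produce the fixed point $x_n$ of the modified mapping $F_n$, and no other non-elementary tool is used. Thus, once Schauder's theorem is granted, Theorem~\ref{tgb} is obtained verbatim.

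For $(a)\Rightarrow (b)$, I would invoke the classical Schauder projection argument. Given a completely continuous self-map $f:C\to C$ of a nonempty closed convex subset $C$ of $X$, compactness of $\overline{f(C)}$ provides for every $n\in\N$ a finite $1/n$-net $\{y_1^n,\ldots,y_{k_n}^n\}\subset f(C)$ and the associated Schauder projection
$$P_n(y):=\frac{\sum_{j=1}^{k_n}\max\{0,1/n-\norm{y-y_j^n}\}\,y_j^n}{\sum_{j=1}^{k_n}\max\{0,1/n-\norm{y-y_j^n}\}},$$
which satisfies $\norm{P_n(y)-y}\le 1/n$ on $\overline{f(C)}$ and takes values in the finite-dimensional simplex $C_n:=\co\{y_1^n,\ldots,y_{k_n}^n\}\subset C$. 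Applying Brouwer's theorem to $P_n\circ f|_{C_n}$ yields $x_n\in C_n$ with $\norm{x_n-f(x_n)}\le 1/n$, and extracting a convergent subsequence from $\{f(x_n)\}$ gives, by continuity of $f$ and closedness of $C$, a fixed point of $f$ in $C$.

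For $(c)\Rightarrow (a)$, given a continuous self-map $f:D\to D$ of a nonempty bounded closed convex set $D\subseteq\R^n$, I would pick $r>0$ with $D\subseteq B_r[0]$ and extend $f$ via the continuous nearest-point retraction $\pi:B_r[0]\to D$ to $\widetilde{f}:=f\circ \pi:B_r[0]\to D$. Set $g(x):=x-\widetilde{f}(x)$; this is continuous and hence compact on $B_r[0]$. Using the Euclidean inner product, which trivially satisfies $(C_1)$ and $(C_2)$, and taking $z=0\in\inte(B_r[0])$, one has, for every $x\in S_r(0)$,
$$\g{g(x),x}=\norm{x}^2-\p{\widetilde{f}(x),x}\ge r^2-r\,\norm{\widetilde{f}(x)}\ge 0.$$
Theorem~\ref{tgb} then yields $0\in\overline{g(B_r[0])}$; but $g(B_r[0])$ is compact, so $0\in g(B_r[0])$. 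Any such $x^*$ satisfies $\widetilde{f}(x^*)=f(\pi(x^*))\in D$, forcing $x^*\in D$, hence $\pi(x^*)=x^*$ and $f(x^*)=x^*$.

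The main technical bulk lies in $(a)\Rightarrow (b)$, where the Schauder projection construction and the compactness-based limit argument need to be executed with care; the remaining arrows are either an observation about the proof already on the page or a one-line boundary estimate once the retraction $\pi$ is in place.
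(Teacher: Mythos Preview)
Your proposal is correct and matches the paper's proof essentially verbatim: the paper likewise treats $(b)\Rightarrow(c)$ as a remark on the proof of Theorem~\ref{tgb}, defers $(a)\Rightarrow(b)$ to the standard Schauder-projection argument (citing~\cite{Sm} rather than spelling it out as you do), and proves $(c)\Rightarrow(a)$ by composing with the metric projection $P_C$ onto $C\subseteq B_R[0]$, applying Theorem~\ref{tgb} to $F(x)=x-f(P_C(x))$ with the Euclidean inner product and the identical boundary estimate $\langle F(x),x\rangle\ge R^2-R\norm{f(P_C(x))}\ge 0$. The only difference is expository: you supply the finite-net construction for $(a)\Rightarrow(b)$ explicitly, whereas the paper simply points to the literature.
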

\begin{proof}

It is well known that $(a)\Rightarrow (b)$ holds; see, for instance, 
\cite[Chapter~2]{Sm}.  
The proof of Theorem~\ref{tgb} shows that $(b)\Rightarrow (c)$. 
Thus we only have to prove that $(c)\Rightarrow (a)$.

To this end, consider a nonempty, compact and convex subset $C$ of 
$\mathbb{R}^n,$ and let $f:C\to C$ be a continuous mapping. 
Since in $\mathbb{R}^n$ all norms are equivalent, without any loss of 
generality we may consider $\mathbb{R}^n$ endowed with the Euclidean norm 
$\|\cdot\|_2.$

Since the set $C$ is bounded, there exists a number $R>0$ such that 
$C\subseteq B_R[0]$. 
Define a mapping $F:B_R[0]\to \mathbb{R}^n$ by
$F(x):= x-f(P_C(x))$, where $P_C$ is the metric projection onto $C$.

Note that $F$ is a completely continuous mapping because $P_C$ 
is a continuous (even nonexpansive) mapping. 
Moreover, given $x\in S_R(0),$ 
we have $\langle F(x),x\rangle\geq R^2-\|f(P_C(x))\|_2 R\geq 0$, 
which means that if we define $\g{x,y}:=\langle x,y\rangle$, 
then $\g{F(x),x}$ has the same sign for all $x\in S_R(0)$. 
Under the above conditions, Theorem~\ref{tgb} yields a point  
$x_0\in B_R[0]$ such that $F(x_0)=0$. Hence $x_0=f(P_C(x_0))$, 
which means that $x_0\in C$ and therefore $P_C(x_0)=x_0$.  
We conclude that $x_0=f(x_0)$, as required. 
\end{proof}


\section*{Acknowledgements}

The first author was partially supported by
MTM~2015-65242-C2-1-P and by P08-FQM-03453. The second author was partially
supported by MTM 2015-65242-C2-2-P. The third author was partially
supported by the Israel Science Foundation (Grants no. 389/12 and 820/17),
by the Fund for the Promotion of Research at the Technion and by the
Technion General Research Fund. He also thanks Sergiu Hart for \cite{H}. 


\end{document}